
\documentclass[preprint, 12pt, english]{elsarticle}
\usepackage{amsmath}
\usepackage{latexsym, amssymb}
\usepackage{amsthm}
\usepackage{txfonts}

\newtheorem{thm}{Theorem}[section] 

\newtheorem{lem}[thm]{Lemma}

\newtheorem{rem}[thm]{Remark}

\newcommand\operA[2]{{\if!#2!\operatorname{#1}\else{\operatorname{#1}_{#2}^{\phantom{I}}}\fi}} 
\newcommand\set[1]{\{#1\}}

\newcommand\charac[1]{\mathrm{char}\left(#1\right)}
\newcommand{\Span}[1]{\mathrm{Span}\left\{#1\right\}}
\newcommand{\Sp}{\text{Sp}}

%
%
\newcommand\Cref[1]{{Corollary~\ref{#1}}}%
%
%
%
%
%


\newcommand{\Trace}[1][]{\if!#1!\operatorname{Tr}\else{\operatorname{Tr}_{#1}^{\phantom{I}}}\fi} 

\long\def\forget#1\forgotten{{}} %

\def\({\left(}
\def\){\right)}

\newif\iffurther
\furtherfalse

\newif\ifXY 
\XYtrue     
%
\ifXY

\input xy
\input xyidioms.tex
\usepackage{xy}
\xyoption{all} %
\fi 

\usepackage{babel}

\journal{Journal of Algebra and its Applications}

\begin{document}

\begin{frontmatter}

\title{Chain Equivalences for Symplectic Bases, Quadratic Forms and Tensor Products of Quaternion Algebras}

\author{Adam Chapman\corref{ch}}
\ead{adam1chapman@yahoo.com}
\cortext[ch]{The author is supported by Wallonie-Bruxelles International.}
\address{ICTEAM Institute, Universit\'{e} Catholique de Louvain, B-1348 Louvain-la-Neuve, Belgium.}

\begin{abstract}
We present a set of generators for the symplectic group which is different from the well-known set of transvections, from which the chain equivalence for quadratic forms in characteristic 2 is an immediate result.
Based on the chain equivalences for quadratic forms, both in characteristic 2 and not 2, we provide chain equivalences for tensor products of quaternion algebras over fields with no nontrivial 3-fold Pfister forms. The chain equivalence for biquaternion algebras in characteristic 2 is also obtained in this process, without any assumption on the base-field.
\end{abstract}

\begin{keyword}
Symplectic Group, Symplectic Bases, Chain Equivalence, Quaternion algebras, Quadratic forms, Characteristic 2
\MSC[2010] primary 16K20; secondary 11E81, 15A63, 20G35
\end{keyword}

\end{frontmatter}

\section{Introduction}

A chain equivalence is a theorem that provides basic steps with which one can obtain from one presentation of an object all the other presentations.
A theorem of this kind is a useful tool for deciding whether two given objects are isomorphic or not.
If one wants for example to define an invariant of a certain object, then in order to prove that it is independent of the choice of presentation, one just has to show that it is invariant under the basic steps the chain equivalence provides.

There is Witt's well-known chain equivalence for nondegenerate symmetric bilinear forms, which in turn provides a chain equivalence for quadratic forms over fields of characteristic not $2$. (See \cite{EKM} for further details.)
In \cite{Revoy1977QF}, Revoy provided a chain equivalence for quadratic forms in characteristic 2.

In \cite{Albert} (for characteristic 2) and \cite{Merkurjev1} (otherwise) it was proven that every central simple algebra of exponent $2$ is Brauer equivalent to some tensor product of quaternion algebras, which added to the significance of these objects.

The chain equivalence for one quaternion algebra has been known for quite a long time.
Lately, two papers have been written on the chain equivalence for biquaternion algebras over fields of characteristic not $2$, \cite{Siv} and \cite{ChapVish2}.

The symplectic group, denoted by $\Sp_{2 n}(F)$, is the group of automorphisms of a symplectic vector space preserving the symplectic form. It is known to be generated by transvections, i.e. the automorphisms fixing hyperplanes. (See \cite{Dieudonne} for further details.)

\section{Preliminaries}

Let $F$ be a field. We use the following expressions to describe quadratic forms:
\begin{itemize}
\item The expression $\langle a_1,\dots,a_t \rangle$ denotes the diagonal quadratic form $q(u_1,\dots,u_t)=a_1 u_1^2+\dots+a_t u_t^2$.
\item In case of characteristic 2, $[a,b]$ denotes the quadratic form $q(u_1,u_2)=a u_1^2+u_1 u_2+b u_2^2$.
\item The direct sum $q_1(u_1,\dots,u_n) \perp q_2(v_1,\dots,v_m)$ denotes the quadratic form $f(u_1,\dots,u_n,v_1,\dots,v_m)=q_1(u_1,\dots,u_n)+q_2(v_1,\dots,v_m)$.
\item The tensor product $\langle a_1,\dots,a_t \rangle \otimes q$ denotes $a_1 q \perp \dots \perp a_t q$.
\end{itemize}

There is a unique 2-dimensional nondegenerate isotropic quadratic form up to isometry, known as the hyperbolic plane and usually denoted by $\varmathbb{H}$.
In characteristic not $2$, $\varmathbb{H}=\langle 1,-1 \rangle$, and in characteristic $2$, $\varmathbb{H}=[0,1]$. We write $n \varmathbb{H}$ for the direct sum of $n$ copies of $\varmathbb{H}$.
Two nondegenerate quadratic forms $q_1$ and $q_2$ are said to be Witt equivalent if $q_1 \simeq q_2 \perp n \varmathbb{H}$ or $q_2 \simeq q_1 \perp n \varmathbb{H}$ for some $n$. In particular, if the forms are of the same dimension then they are Witt equivalent if and only if they are isometric.
The group $I_q F=I_q^1 F$ is the additive group of Witt equivalence classes of even-dimensional nondegenerate quadratic forms over $F$ with $\perp$ as the group operation.
The subgroup $I_q^n F$ is defined recursively to be the subgroup generated by the forms $\langle 1,-a \rangle \otimes q$ where $a \in F^\times$ and $q \in I_q^{n-1} F$.
Following the traditional abuse of notation, we say that a quadratic form $q$ belongs to $I_q^n$ if its Witt equivalence class belongs to that group.

If $\charac{F} \neq 2$ then every quadratic form $q \in I_q F$ is isometric to some diagonal form $\langle a_1,\dots,a_{2 n} \rangle$ where $a_i \neq 0$ for all $i \in \set{1,\dots,2 n}$. The discriminant of $q$ is the class of $(-1)^n \cdot a_1 \cdot \ldots \cdot a_{2 n}$ in $F^\times / (F^\times)^2$. The discriminant is $1$ if and only if $q \in I_q^2 F$. Every quaternion algebra in this case is isomorphic to $(a,b)=F[x,y : x^2=a, y^2=b, x y=-y x]$ for some $a,b \in F^\times$.

If $\charac{F}=2$ then every quadratic form $q \in I_q F$ is isometric to a form $[a_1,b_1] \perp \dots \perp [a_n,b_n]$ for some $a_1,\dots,a_n,b_1,\dots,b_n \in F$. We call the form $[a_1,b_1] \perp \dots \perp [a_n,b_n]$ ``standard". The discriminant of $q$ (also known as the ``Arf invariant") is the class of $a_1 b_1+\dots+a_n b_n$ in $F/\wp(F)$ where $\wp(F)=\set{a^2+a : a \in F}$. The discriminant is zero if and only if $q \in I_q^2 F$. Every quaternion algebra in this case is isomorphic to $[a,b)=F[x,y : x^2+x=a, y^2=b, x y+y x=y]$ for some $a \in F$ and $b \in F^\times$.

For any quadratic form $q$ in $n$ variables, the Clifford algebra $C(q)$ is defined to be $F[x_1,\dots,x_n : (u_1 x_1+\dots+u_n x_n)^2=q(u_1,\dots,u_n)]$.
If $q \in I_q F$ then $C(q)$ is isomorphic to a tensor product of quaternion algebras over $F$.
There is an epimorphism from $I_q^2 F$ to $Br_2(F)$ taking each quadratic form to its Clifford algebra.
The kernel of this homomorphism is known to be $I_q^3 F$, which means that there is an isomorphism between $I_q^2 F/I_q^3 F$ and $Br_2(F)$ (See \cite{Merkurjev1} for characteristic not 2 and \cite{Sah1972} for characteristic 2).

\section{Symplectic Bases}\label{altform}

Let $V$ be a $2 n$-dimensional symplectic vector space over a field $F$ of arbitrary characteristic for some $n \in \mathbb{N}$, and let $B$ be the symplectic (i.e. nondegenerate alternating bilinear) form on $V$.
A \emph{symplectic basis} of $(V,B)$ is an ordered set $(x_1,y_1|x_2,y_2|\dots|x_n,y_n)$ of $n$ mutually orthogonal symplectic pairs of vectors in $V$.

Let $S$ be the set of automorphisms in $\Sp_{2 n}(F)$ acting on the symplectic basis $(x_1,y_1|\dots|x_n,y_n)$ as either
\begin{enumerate}
\item $(x_k,y_k|x_{k+1},y_{k+1}) \rightarrow (x_k+y_{k+1},y_k|x_{k+1}+y_k,y_{k+1})$
\end{enumerate}
for some $k \in \set{1,\dots,n-1}$, or one of the following:
\begin{enumerate}\setcounter{enumi}{1}
\item $(x_k,y_k) \rightarrow (\beta x_k,\beta^{-1} y_k)$,
\item $(x_k,y_k) \rightarrow (x_k+\beta y_k,y_k)$,
\item $(x_k,y_k) \rightarrow (x_k,y_k+\beta x_k)$,
\end{enumerate}
for some $k \in \set{1,\dots,n}$ and $\beta \in F^\times$.

We write $C \approx C'$ for any two symplectic bases satisfying $s(C)=C'$ for some $s \in S$.
Let $\sim$ be the equivalence relation induced by $\approx$.

We are going to prove that $C \sim C'$ for any two symplectic bases of $(V,B)$, which is equivalent to proving that $S$ generates $\Sp_n(F)$.

\begin{rem}\label{neq1}
If $n=1$ then $\Sp_2(F)$ is known to be generated by automorphisms of Types 2,3 and 4.
\end{rem}

\begin{rem}\label{other}
The following equivalences are easy exercises:
$$(x_1,y_1|x_2,y_2) \sim (x_1+x_2,y_1|x_2,y_2-y_1) \sim (x_1,y_1|x_2,y_2-y_1) \sim (x_1,y_1+y_2|x_2-x_1,y_2).$$
\end{rem}

\begin{lem}\label{permute}
For every $\sigma \in S_n$, $(x_1,y_1|\dots|x_n,y_n) \sim (x_{\sigma(1)},y_{\sigma(1)}|\dots|x_{\sigma(n)},y_{\sigma(n)})$.
\end{lem}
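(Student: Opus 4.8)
The plan is to reduce the statement to the single swap of two neighbouring pairs and then to build that swap explicitly out of the generators in $S$.

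First I would reduce to adjacent transpositions. The symmetric group $S_n$ is generated by the transpositions $\tau_k=(k,k+1)$ with $1\le k\le n-1$, and since $\sim$ is an equivalence relation it is closed under composing the associated basis rearrangements; hence it suffices to prove, for each $k$, that $(\dots|x_k,y_k|x_{k+1},y_{k+1}|\dots)\sim(\dots|x_{k+1},y_{k+1}|x_k,y_k|\dots)$ with all remaining pairs left in place. Every automorphism I intend to use (Type 1 with index $k$, and Types 2--4 with indices $k,k+1$) acts as the identity on the other pairs and preserves all mutual orthogonalities, so I may assume $n=2$ and $k=1$ and reduce to proving $(x_1,y_1|x_2,y_2)\sim(x_2,y_2|x_1,y_1)$.

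For the two-pair swap the key observation is that the Type 1 automorphism $g\colon x_1\mapsto x_1+y_2,\ x_2\mapsto x_2+y_1$ (fixing $y_1,y_2$) behaves like a ``cross unipotent'', so an analogue of the $\Sp_2$ Weyl factorisation $w=u\,\ell\,u$ produces the block swap. Let $\rho$ be the simultaneous in-plane quarter rotation $x_i\mapsto y_i,\ y_i\mapsto -x_i$ for $i=1,2$, which lies in the group generated by $S$ because, by \Rref{neq1}, Types 2--4 generate $\Sp_2(F)$ on each plane. Then $\rho g\rho^{-1}$ is the opposite-corner cross move $y_1\mapsto y_1-x_2,\ y_2\mapsto y_2-x_1$, and a direct check shows that the composite $g\circ(\rho g\rho^{-1})\circ g$ sends $x_1\mapsto y_2,\ y_1\mapsto -x_2,\ x_2\mapsto y_1,\ y_2\mapsto -x_1$; that is, it carries the basis to $(y_2,-x_2\,|\,y_1,-x_1)$. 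Applying an in-plane rotation in each of the two slots (again Types 2--4) turns $(y_2,-x_2)$ into $(x_2,y_2)$ and $(y_1,-x_1)$ into $(x_1,y_1)$, which is precisely the swapped basis. Alternatively, the cross moves recorded in \Rref{other} can be assembled into the same transformation.

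The routine parts are the reduction to $n=2$ and the verification that each intermediate tuple is again a symplectic basis, which amounts to checking that the pairings $B(\cdot,\cdot)$ come out as required. The step I expect to be the main obstacle is the explicit construction: finding a word in the generators that realises the block swap and then disposing of the residual signs. The clean way through is the Weyl-type factorisation $g\circ(\rho g\rho^{-1})\circ g$ above, after which the signs $-x_1,-x_2$ are absorbed by the final in-plane rotations (which secretly use Type 2 with $\beta=-1$); once this word is exhibited, the lemma follows immediately.
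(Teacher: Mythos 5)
Your proof is correct, and the key computations check out: $\rho g\rho^{-1}$ indeed fixes $x_1,x_2$ and sends $y_1\mapsto y_1-x_2$, $y_2\mapsto y_2-x_1$, and the palindrome $g\circ(\rho g\rho^{-1})\circ g$ carries $(x_1,y_1|x_2,y_2)$ to $(y_2,-x_2|y_1,-x_1)$, after which in-plane elements of $\Sp_2$ finish the job. However, your realization of the adjacent swap is genuinely different from the paper's. The paper makes the same reduction to consecutive pairs, but then simply exhibits a four-step chain built from the equivalences of \Rref{other}: $(x_1,y_1|x_2,y_2)\sim(x_1+x_2,y_1|x_2,y_2-y_1)\sim(x_1+x_2,y_2|-x_1,y_2-y_1)\sim(x_2,y_2|-x_1,-y_1)\approx(x_2,y_2|x_1,y_1)$, the last step being Type 2 with $\beta=-1$. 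You instead work in the group $\langle S\rangle$ and factor the block swap as a Weyl-type word $u\ell u$ in the two cross unipotents, with \Rref{neq1} supplying $\rho$ and the final sign-fixing rotations; this is legitimate because the paper itself observes that chain equivalence of all bases is the same as $S$ generating $\Sp_{2n}(F)$ (the moves are equivariant, so a word in generators unfolds into a chain), though it would be worth one sentence in your write-up making that translation explicit for the conjugate $\rho g\rho^{-1}$. What the paper's route buys is brevity and staying entirely at the level of moves, at the price of leaning on \Rref{other} (left there as an exercise); what yours buys is a conceptual explanation---the swap is the block Weyl element, forced to be a product of the two opposite cross unipotents, exactly as in $\Sp_2$---and it isolates the ingredients cleanly: one Type 1 generator plus the in-plane copies of $\Sp_2(F)$ from \Rref{neq1}. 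The two constructions are in fact cousins: the equivalences of \Rref{other} are themselves conjugates of the Type 1 move by in-plane rotations, which is precisely what your $\rho g\rho^{-1}$ is.
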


\begin{proof}
It suffices to consider swapping two consecutive pairs.
\begin{eqnarray*}
(x_1,y_1|x_2,y_2) &\sim& (x_1+x_2,y_1|x_2,y_2-y_1) \sim (x_1+x_2,y_2|-x_1,y_2-y_1)\\ &\sim& (x_2,y_2|-x_1,-y_1) \approx (x_2,y_2|x_1,y_1).
\end{eqnarray*}
\end{proof}

\begin{lem}\label{completion}
For any symplectic basis $C$ and $0 \neq w \in V$, $w$ can be completed to a symplectic basis $C'$ such that $C \sim C'$.
\end{lem}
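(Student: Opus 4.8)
The plan is to start from an arbitrary expansion of $w$ in the given basis $C=(x_1,y_1|\dots|x_n,y_n)$, say $w=\sum_{i=1}^n (a_i x_i + b_i y_i)$, and to use the moves in $S$ to transform $C$ into a symplectic basis whose first vector is $w$. I will proceed in two stages: first align, within each symplectic plane $\Span{x_i,y_i}$, the component of $w$ with the first vector of the pair, and then merge these first vectors one at a time until only $w$ remains.

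For the first stage, fix an index $i$ with $(a_i,b_i)\neq(0,0)$ and set $w_i=a_i x_i+b_i y_i$, a nonzero vector of the plane $\Span{x_i,y_i}$. By \Rref{neq1} the moves of Types 2, 3 and 4 acting on the $i$-th pair realize all of $\Sp_2(F)$ on that plane, and since $\Sp_2(F)$ acts transitively on nonzero vectors (any nonzero vector can serve as the first member of a symplectic pair, the second being supplied by the nondegeneracy of $B$), I can transform $(x_i,y_i)$ into a symplectic pair whose first vector equals $w_i$. Carrying this out for every $i$ in the support $I=\set{i:(a_i,b_i)\neq(0,0)}$ and then relabelling by \Lref{permute}, I reach an equivalent symplectic basis in which $w=x_1+\dots+x_m$, where $m=\card{I}$ and the $x_j$ here denote the new first vectors.

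For the second stage, I use the mixing step from \Rref{other}, namely $(x_1,y_1|x_2,y_2)\sim(x_1+x_2,y_1|x_2,y_2-y_1)$. Applying it to the first two pairs replaces the first vector $x_1$ by $x_1+x_2$ while leaving $x_3,\dots,x_m$ untouched, so that in the new basis $w=(x_1+x_2)+x_3+\dots+x_m$ is a sum of only $m-1$ of the first vectors (the pair that contributed $x_2$ no longer appears in the expansion of $w$). Iterating this reduction $m-1$ times produces an equivalent symplectic basis $C'$ whose first vector is exactly $w$, which is what is required.

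The computations are all routine; the only point that needs genuine care is the first stage, where one must verify that the single-pair moves of Types 2, 3 and 4 suffice to send the first basis vector of a plane to an arbitrary prescribed nonzero vector of that plane. This is precisely the transitivity of $\Sp_2(F)$ on nonzero vectors, guaranteed by \Rref{neq1}; once this is in hand, the merging in the second stage is a direct bookkeeping of coordinates under the equivalence of \Rref{other}.
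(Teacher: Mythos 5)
Your proof is correct and takes essentially the same route as the paper's: both reduce, via the $n=1$ case (\Rref{neq1}) applied plane-by-plane together with \Lref{permute}, to the situation $w = x_1+\dots+x_m$, and then absorb the summands one at a time using a Type-1-style merging move. The only cosmetic difference is that the paper organizes the merging as an induction on $m$ (completing $x'=x_2+\dots+x_m$ to a pair and merging it with $x_1$ in one Type 1 step), whereas you iterate the equivalence of \Rref{other} directly; when iterating, just note that \Lref{permute} is needed to move the already-absorbed pair out of the way before the next merge.
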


\begin{proof}
The case $n = 1$ is clear. Let $H_i = \Span{x_i,y_i}$, so
$V = H_1 \perp \dots \perp H_n$. By permuting and applying the case $n = 1$ to the
$H_i$-s, we may without loss of generality assume $w = x_1 + \dots + x_m$ with $1 \leq m \leq n$. We now use induction on $m$. If $m = 1$, we are done. If $m > 1$, let $x'= x_2+\dots+x_m$. By induction there exists a basis $(x_1,y_1|x',y'|\dots) \sim C$ and
$C \sim (x_1,y_1|-y',x'|\dots) \approx (x_1 + x',y_1|-y' + y_1,x'|\dots)$ as desired.
\end{proof}

\begin{thm}\label{sympchain}
For any two symplectic bases of $(V,B)$, $C \sim C'$.
\end{thm}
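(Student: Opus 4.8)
The plan is to argue by induction on $n$, the base case $n=1$ being exactly Remark~\ref{neq1}. So assume the theorem holds for symplectic spaces of dimension $2(n-1)$, and let $C=(x_1,y_1|\dots|x_n,y_n)$ and $C'=(x_1',y_1'|\dots|x_n',y_n')$ be two symplectic bases of $(V,B)$. The first move is to align the leading vectors: by Lemma~\ref{completion} applied to the nonzero vector $w=x_1'$, there is a symplectic basis $D=(x_1',z_1|x_2'',y_2''|\dots|x_n'',y_n'')$ with $C\sim D$ whose first vector is exactly $x_1'$.

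The heart of the proof is to upgrade this to an alignment of the whole first pair, i.e.\ to produce $D\sim D'$ for some basis $D'$ whose first pair is $(x_1',y_1')$. Since $B(x_1',y_1')=B(x_1',z_1)=1$, the difference $y_1'-z_1$ lies in $(x_1')^\perp=F x_1'\oplus W$, where $W=\Span{x_2'',y_2'',\dots,x_n'',y_n''}$; write $y_1'-z_1=\alpha x_1'+v$ with $v\in W$. A single Type-4 move $(x_1',z_1)\to(x_1',z_1+\alpha x_1')$ disposes of the $\alpha x_1'$ term, so it remains to add the vector $v\in W$ into the second slot of the first pair without disturbing $x_1'$. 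I would do this one summand at a time: to add a multiple of $y_i''$ I use the derived equivalence of Remark~\ref{other}, namely $(x_1',z_1|x_i'',y_i'')\sim(x_1',z_1+y_i''|x_i''-x_1',y_i'')$ (preceded and followed by a Type-2 scaling to absorb the coefficient, and by Lemma~\ref{permute} to bring the two pairs adjacent); to add a multiple of $x_i''$ I first swap the roles of $x_i''$ and $y_i''$ inside $H_i$ by a Type-2,3,4 move, which is legitimate by Remark~\ref{neq1} since $(-y_i'',x_i'')$ is another symplectic basis of $H_i$, and then repeat. Each such step fixes $x_1'$ and only alters the second slot of the first pair together with the auxiliary pair $i$, so composing them over $i=2,\dots,n$ turns the second slot into $z_1+\alpha x_1'+v=y_1'$. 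The resulting basis $D'$ has first pair $(x_1',y_1')$, and its remaining pairs automatically form a symplectic basis of $W':=\{x_1',y_1'\}^\perp$.

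Finally I would invoke the inductive hypothesis on the $2(n-1)$-dimensional symplectic space $W'$: the tail of $D'$ and the tail $(x_2',y_2'|\dots|x_n',y_n')$ of $C'$ are two symplectic bases of $W'$, hence $\sim$-equivalent through moves of Types~1--4 carried by the pairs indexed $2,\dots,n$. These are precisely generators in $S$ that fix the first pair, so they give $D'\sim C'$ in $V$, whence $C\sim D\sim D'\sim C'$. The main obstacle is the middle step: the generators only permit adding vectors between neighbouring pairs, so transporting an \emph{arbitrary} vector $v$ of the orthogonal complement into the first pair while keeping $x_1'$ frozen forces one to combine the Type-1/Remark~\ref{other} moves with the in-plane $x\leftrightarrow y$ swaps, the scalings, and the permutations of Lemma~\ref{permute}; verifying that this is always possible, and that it never perturbs $x_1'$, is where the genuine work lies.
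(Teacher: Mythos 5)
Your proposal is correct, and its skeleton is the same as the paper's: induction on $n$ with Remark~\ref{neq1} as base, Lemma~\ref{completion} to make the two bases share their first vector, an alignment of the whole first pair, and finally the inductive hypothesis applied to the orthogonal complement of that pair, whose generating moves lift to elements of $S$ fixing the first pair. The two arguments part ways only in the middle step, which you rightly single out as the heart. The paper handles it by reusing the reduction of Lemma~\ref{completion}: writing the second basis as $(x_1,v_1|u_2,v_2|\dots)$, it first arranges, by case $n=1$ moves and permutations on the auxiliary pairs, that $v_1=r+s$ with $r\in H_1$ and $s\in H_2$, and indeed $s=u_2$; then a rotation of the first two pairs followed by a single move of Remark~\ref{other} kills the component outside $H_1$, namely $C'\sim(-v_1,x_1|-v_2,u_2|\dots)\sim(-v_1+u_2,x_1|\dots)\sim(-r,x_1|\dots)$, after which case $n=1$ on $H_1$ finishes. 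You instead leave the discrepancy $y_1'-z_1=\alpha x_1'+v$ spread over all the auxiliary pairs and transport it into the first pair one pair at a time, using Lemma~\ref{permute}, scalings, in-plane swaps and the fourth equivalence of Remark~\ref{other}. This does go through, and the ``genuine work'' you defer is routine, with one bookkeeping point worth making explicit: when you add the $x_i''$-component after the $y_i''$-component, pair $i$ has already been perturbed to $(x_i''-\beta_i x_1',y_i'')$, so the second slot of the first pair picks up an extra multiple of $x_1'$; all such multiples are harmless, since a single final move of Type 4 removes them, and $x_1'$ itself is never touched. What the paper's version buys is brevity, since collapsing the discrepancy into one auxiliary pair means only one transport is needed; what yours buys is an explicit, self-contained transport that avoids re-running the Lemma~\ref{completion}-style reduction inside the proof of the theorem.
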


\begin{proof}
We use induction on $n$, the case $n=1$ being Remark \ref{neq1}.
From Lemma \ref{completion}, we are reduced to showing that if $C = (x_1,y_1|x_2,y_2|\dots)$ and $C' = (x_1,v_1|u_2,v_2|\dots)$
are two symplectic bases, then $C \sim C'$. As in that lemma, we may reduce to the case
where $v_1 = r + s$ with $r \in H_1$ and $s \in H_2$. If $s=0$ we are done (case $n=1$),
so we may assume $s=u_2$ (by applying the case $n = 1$ on $H_2$), and clearly $r \neq 0$. But then
$B(x_1,v_1) = B(x_1,r) = 1$, so $(x_1,r)$ is a symplectic basis for $H_1$, and therefore $(x_1,r) \sim (x_1,y_1)$ (case $n = 1$). Also, $C' \sim (-v_1,x_1|-v_2, u_2|\dots) \sim (-v_1 + u_2, x_1|\dots) \sim
(-r,x_1|\dots) \sim (x_1,r|\dots) \sim (x_1,y_1|\dots)$ and we are reduced to showing that
if $C = (x_1,y_1|x_2,y_2|\dots)$ and $C' = (x_1,y_1|u_2,v_2|\dots)$ are two symplectic bases,
then $C \sim C'$. But then $(x_2,y_2|\dots|x_n,y_n)$ and $(u_2,v_2|\dots|u_n,v_n)$ are symplectic
bases of $H_1^\perp$, and we are done by induction.
\end{proof}

The following chain equivalence for quadratic forms in characteristic 2 is an immediate result of Theorem \ref{sympchain}:
\begin{thm}\label{qfchain}(\cite[Proposition 3]{Revoy1977QF})
Over a field of characteristic 2, every two isometric standard quadratic forms are connected by a finite number of steps, where each step is either
\begin{enumerate}
\item[\textbf{A}] $[a_k,b_k] \perp [a_{k+1},b_{k+1}] \rightarrow [a_k+b_{k+1},b_k] \perp [a_{k+1}+b_k,b_{k+1}]$
\end{enumerate}
for some $k \in \set{1,\dots,n-1}$, or one of the following types:
\begin{enumerate}
\item[\textbf{B}] $[a_k,b_k] \rightarrow [\beta^2 a_k,\beta^{-2} b_k]$,
\item[\textbf{C}] $[a_k,b_k] \rightarrow [a_k+\beta^2 b_k+\beta,b_k]$,
\item[\textbf{D}] $[a_k,b_k] \rightarrow [a_k,b_k+\beta^2 a_k+\beta]$,
\end{enumerate}
for some $k \in \set{1,\dots,n}$ and $\beta \in F^\times$.
\end{thm}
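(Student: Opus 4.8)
The plan is to realize standard quadratic forms in characteristic $2$ as symplectic bases and to deduce Theorem \ref{qfchain} directly from Theorem \ref{sympchain}. The bridge is the polar form: for a nondegenerate even-dimensional quadratic form $q$ on a space $V$ one puts $B(u,v) = q(u+v) + q(u) + q(v)$, which in characteristic $2$ is symmetric and satisfies $B(u,u) = 0$, hence is a symplectic form on $V$. Writing $q$ in standard form $[a_1,b_1] \perp \dots \perp [a_n,b_n]$ is then exactly the same as choosing a symplectic basis $(x_1,y_1|\dots|x_n,y_n)$ of $(V,B)$ and recording $a_k = q(x_k)$, $b_k = q(y_k)$: each pair contributes $q(x_k) u^2 + B(x_k,y_k) uv + q(y_k) v^2 = [q(x_k),q(y_k)]$ because $B(x_k,y_k) = 1$. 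Thus the standard presentations of a fixed form $q$ correspond bijectively to the symplectic bases of $(V,B)$.

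First I would record the identities that drive every computation, namely $q(\beta u) = \beta^2 q(u)$ and $q(u+v) = q(u)+q(v)+B(u,v)$ together with $B(u,u) = 0$. Using these I would check that each of the four generators of $S$ from Section \ref{altform} produces precisely one of the steps A--D. Type $2$ sends $(x_k,y_k)$ to $(\beta x_k, \beta^{-1} y_k)$, so the recorded values become $\beta^2 a_k$ and $\beta^{-2} b_k$, which is step B; Type $3$ gives $q(x_k + \beta y_k) = a_k + \beta^2 b_k + \beta$, which is step C; Type $4$ gives step D by the symmetric computation. Type $1$ is the only move involving two pairs: since $x_k,y_{k+1}$ lie in orthogonal pairs (and likewise $x_{k+1},y_k$), one gets $q(x_k + y_{k+1}) = a_k + b_{k+1}$ and $q(x_{k+1}+y_k) = a_{k+1}+b_k$, which is step A. In every case one must also confirm that the image is again a symplectic basis, i.e. that the new pairs are symplectic and mutually orthogonal; for Type $1$ the orthogonality of the two new blocks uses $B(x_k+y_{k+1}, x_{k+1}+y_k) = B(x_k,y_k) + B(x_{k+1},y_{k+1}) = 1 + 1 = 0$ in characteristic $2$.

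It remains to bring in the hypothesis of isometry. Given two isometric standard forms, an isometry $\phi$ identifies their underlying quadratic spaces, and since $\phi$ preserves $q$ it preserves the polar form $B$; hence the standard symplectic basis of the target pulls back along $\phi$ to a second symplectic basis of $(V,B)$ whose recorded $q$-values are the coefficients $a_k', b_k'$ of the second form. We now have two symplectic bases of one and the same symplectic space, which Theorem \ref{sympchain} connects by a finite chain of moves of Types $1$--$4$. Translating that chain through the dictionary of the previous paragraph produces a finite chain of steps A--D joining the two standard forms, which is the assertion.

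I expect the main obstacle to be bookkeeping rather than anything conceptual: verifying that the Type $1$ move keeps us inside the class of standard forms, that is, that the transformed basis is genuinely symplectic so that each off-diagonal coefficient remains $1$ and distinct blocks stay mutually orthogonal. Once the polar-form dictionary is in place, the match between the generating moves and the steps A--D is forced, and the characteristic-$2$ facts $-1 = 1$ and $B(u,u) = 0$ carry out all of the real work.
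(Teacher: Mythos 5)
Your proof is correct and is essentially the paper's own argument: the paper gives no separate proof, stating only that Theorem~\ref{qfchain} is ``an immediate result'' of Theorem~\ref{sympchain}, and your polar-form dictionary (standard presentations of $q$ $\leftrightarrow$ symplectic bases of $(V,B)$, with the four generators of $S$ translating exactly to steps \textbf{A}--\textbf{D}) is precisely the intended derivation, spelled out in full.
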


\begin{rem}\label{nonzero}
If $[a_1,b_1] \perp \dots \perp [a_n,b_n] \simeq [a_1',b_1'] \perp \dots \perp [a_n',b_n']$ such that $b_i,b_i' \neq 0$ for all $1 \leq i \leq n$, then they are connected by a chain of steps as described in Theorem \ref{qfchain} such that the second coefficient in each summand is nonzero.
\end{rem}

\begin{proof}
Let $[a_1,b_1] \perp \dots \perp [a_n,b_n]=q_0,q_1,\dots,q_m=[a_1',b_1'] \perp \dots \perp [a_n',b_n']$ be a chain as described in Theorem \ref{qfchain}.
Assume that the second coefficient in the first summand in each of the forms $q_k,q_{k+1},\dots,q_l$ is zero for some $1 \leq k \leq l \leq m-1$.
Assume that this interval is maximal, i.e. that the second coefficients in the first summands of $q_{k-1}$ and $q_{l+1}$ are nonzero.
The step $q_{k-1} \rightarrow q_k$ is clearly of Type \textbf{D}, which means that the first summand in $q_{k-1}$ is hyperbolic and equal to $[a,\beta^2 a+\beta]$ for some $a,\beta \in F^\times$. Similarly, the first summand in $q_{l+1}$ is hyperbolic and equal to $[a',\beta'^2 a'+\beta']$ for some $a',\beta' \in F^\times$.
In this interval, every step that involves the first summand changes only the first summand. [It is enough to observe that it holds for a step of Type \textbf{A}, because the other types involve only one summand to begin with.]
Therefore the steps in the interval are divided into two sets - those that change only the first summand and those that do not change the first summand at all.
The steps that change only the first summand eventually change $[a,\beta^2 a+\beta]$ to $[a',\beta'^2 a'+\beta']$, and therefore they can be replaced by the following sequence of steps (of Types \textbf{C}, \textbf{D} and \textbf{C} respectively)
$[a,\beta^2 a+\beta] \rightarrow [0,\beta^2 a+\beta] \rightarrow [0,\beta'^2 a'+\beta'] \rightarrow [a',\beta'^2 a'+\beta']$. We leave the other steps as they are.
By repeating this process for any such interval, we obtain a chain where the second entry in the first coefficient in each of the forms is nonzero.
We can then proceed to the second summand, and the desired chain is obtained inductively.
\end{proof}

\section{Tensor Products of Quaternion Algebras in Characteristic $2$}\label{tensorchar2}

Let $F$ be a field of characteristic 2.

\begin{thm}
If $I_q^3 F=0$ or $n=2$ then every two isomorphic tensor products of $n$ quaternion algebras are connected by a finite number of steps, where each step is either
\begin{enumerate}
\item $[c_k,d_k) \otimes [c_{k+1},d_{k+1}) \rightarrow [c_k+\beta^2 d_k d_{k+1},d_k) \otimes [c_{k+1}+\beta^2 d_k d_{k+1},d_{k+1})$
\end{enumerate}
for some $k \in \set{1,\dots,n-1}$ and $\beta \in F$, or one of the following types:
\begin{enumerate}\setcounter{enumi}{1}
\item $[c_1,d_1) \otimes \dots \otimes [c_n,d_n) \rightarrow [c_1,(1+\beta+\beta^2 \sum_{r=1}^n c_r) d_1) \otimes \dots \otimes [c_n,(1+\beta+\beta^2 \sum_{r=1}^n c_r) d_n)$,
\item $[c_k,d_k) \rightarrow [c_k+\alpha^2 d_k+\beta^2+\beta,d_k)$,
\item $[c_k,d_k) \rightarrow [c_k,(\alpha^2+\alpha \beta+\beta^2 c_k) d_k)$,
\end{enumerate}
for some $k \in \set{1,\dots,n}$ and $\alpha,\beta \in F$.
\end{thm}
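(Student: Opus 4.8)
The plan is to transport the chain equivalence for forms (Theorem~\ref{qfchain}) across the Clifford-algebra construction. First I would record the identity $C([a,b]) \cong [ab,b)$: writing $C([a,b]) = F[x_1,x_2 : x_1^2=a,\ x_2^2=b,\ x_1x_2+x_2x_1=1]$ and setting $y=x_2$, $x=x_1x_2$, one checks directly that $x^2+x=ab$, $y^2=b$ and $xy+yx=y$. Because $\charac{F}=2$ kills the sign $(-1)^{|a||b|}$ in the graded tensor product, the graded and ungraded tensor products of these $\mathbb Z/2$-graded quaternion algebras coincide as $F$-algebras, so $C([a_1,b_1]\perp\dots\perp[a_n,b_n]) \cong [a_1b_1,b_1) \otimes \dots \otimes [a_nb_n,b_n)$. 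Hence every tensor product $[c_1,d_1)\otimes\dots\otimes[c_n,d_n)$ with all $d_i\in F^\times$ is the Clifford algebra of the standard form $[c_1/d_1,d_1]\perp\dots\perp[c_n/d_n,d_n]$, giving the dictionary $[c,d)\leftrightarrow[c/d,d]$.

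Next I would check that each of the four steps is an $F$-algebra isomorphism realising, under this dictionary, a step of Theorem~\ref{qfchain}. The relevant substitutions are $x_k\mapsto x_k+\beta y_ky_{k+1}$ and $x_{k+1}\mapsto x_{k+1}+\beta y_ky_{k+1}$ for Type~1; a global scaling of the $y_i$ for Type~2, which is an isomorphism precisely because $\gamma=1+\beta+\beta^2\sum_r c_r$ is the value at $(1,\beta)$ of the norm form $u^2+uv+(\sum_r c_r)v^2$ of $F[\wp^{-1}(\sum_r c_r)]$, so that $\sum_r[c_r,\gamma)=[\sum_r c_r,\gamma)$ splits; $x_k\mapsto x_k+\alpha y_k+\beta$ for Type~3; and scaling $y_k$ by the norm $\alpha^2+\alpha\beta+\beta^2c_k$ for Type~4. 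Substituting $c=ab$, $d=b$ one finds that form-step~\textbf{A} becomes Type~1 with $\beta=1$, step~\textbf{B} becomes Type~4 with $\beta=0$, step~\textbf{C} becomes Type~3 with $\alpha=0$ and parameter $\beta d_k$, and step~\textbf{D} becomes a composition of a Type~4 and a Type~3 move. Using Remark~\ref{nonzero} to keep every second coordinate nonzero throughout, this shows that \emph{isometric} standard forms correspond to step-connected tensor products.

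The crux is the converse, and it is the only place the hypothesis enters. Given isomorphic $A\cong A'$ with associated forms $q,q'$, I would exploit the fact that Type~3 with $\alpha\neq0$ changes the Arf invariant $\sum c_i$ of the associated form (by $\alpha^2 d_k$) while preserving the algebra, reflecting that the ungraded Clifford algebra forgets the Arf invariant; after also adjusting the $d_k$ by norms via Type~4, I would use this to reduce to the case where $q$ and $q'$ have equal Arf invariant. Then $q\perp q'$ has trivial Arf, hence lies in $I_q^2 F$, and its Clifford invariant is $[C(q)\otimes C(q')]=[A]+[A']=0$, so $q\perp q' \in I_q^3 F$. If $I_q^3 F=0$ this form is hyperbolic; since $q\perp q$ is hyperbolic in characteristic~$2$, this forces $q\simeq q'$, returning us to the previous paragraph. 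For $n=2$ I would instead invoke the classification of biquaternion algebras by their $6$-dimensional Albert form, the characteristic-$2$ counterpart of \cite{Siv,ChapVish2}, to produce isometric associated forms directly.

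The principal difficulty is exactly this converse: promoting an abstract algebra isomorphism to an isometry of the associated quadratic forms using only elementary moves. This is where $I_q^3 F=0$ (or, for $n=2$, the Albert form) is indispensable, since a nonzero $3$-fold Pfister form would yield forms with the same Clifford algebra and Arf invariant but different isometry class, hence isomorphic tensor products that the form-based argument cannot connect. The subtler subpoints within this step are verifying that the combined freedom of Types~2, 3 and~4 really suffices to match the Arf invariants before passing to $I_q^2 F$, and the norm computation certifying that Type~2 is an isomorphism; the most calculation-heavy routine verification is the translation of form-step~\textbf{D} into a Type~4 followed by a Type~3 move.
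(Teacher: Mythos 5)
Your reduction dictionary is where the trouble starts, and it is genuinely different from the paper's. You attach to $[c_1,d_1)\otimes\dots\otimes[c_n,d_n)$ the $2n$-dimensional form $[c_1/d_1,d_1]\perp\dots\perp[c_n/d_n,d_n]$, whose Clifford algebra is $A$ but whose Arf invariant $\sum_i c_i$ is uncontrolled; the paper instead attaches the $2(n+1)$-dimensional form $[c_1d_1^{-1},d_1]\perp\dots\perp[c_nd_n^{-1},d_n]\perp[c_1+\dots+c_n,1]$, which lies in $I_q^2F$ \emph{by construction} (the extra summand absorbs the Arf invariant) and has Clifford algebra $M_2(A)$. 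With that choice, two forms attached to isomorphic products are automatically isometric when $I_q^3F=0$ (their orthogonal sum has trivial Clifford invariant, hence lies in $I_q^3F=0$), respectively similar when $n=2$ by Jacobson/Mammone--Shapiro, and \emph{every} step of Theorem~\ref{qfchain} on these longer forms, including the steps acting on the $(n+1)$-st summand, translates into the four algebra moves (that is exactly where the global Type~2 move and the $k=n+1$ cases in the paper's case analysis come from). Your first two paragraphs (the identities $C([a,b])\cong[ab,b)$, $C(q_1\perp q_2)\cong C(q_1)\otimes C(q_2)$ in characteristic~$2$, and the move-by-move translation) are fine, but they only prove the theorem for presentations whose associated forms are \emph{isometric}; the passage from ``isomorphic algebras'' to ``isometric forms'' is the crux, and your Arf-matching step, which you acknowledge but do not prove, is a genuine gap rather than a routine verification.

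To see that it is not routine: from a presentation with data $(c_k',d_k')$, Type~3 moves change the Arf invariant of your associated form only within the coset $\sum_k c_k'+\wp(F)+d_1'F^2+\dots+d_n'F^2$, while Types~2 and~4 merely rescale the $d_k'$ by values of norm forms $u^2+uv+c\,v^2$. Already for $n=1$, matching Arf invariants of $[c,d)\cong[c',d')$ via Type~3 alone requires $c+c'\in\wp(F)+d'F^2$, i.e.\ that $[c+c',d')$ splits, equivalently that $[c,dd')$ splits --- and this does \emph{not} follow formally from $[c,d)\cong[c',d')$; allowing Type~4 rescalings turns the requirement into $dd'\in N(F[\wp^{-1}(c)])\cdot N(F[\wp^{-1}(c')])$, a common-slot-type assertion whose proof is essentially the $n=1$ chain lemma itself. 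So the lemma you are missing has the same depth as the theorem you are proving. The $n=2$ case is worse: Jacobson and Mammone--Shapiro classify biquaternion algebras by \emph{similarity of their six-dimensional Albert forms} (which are the paper's associated forms), not by isometry of your four-dimensional forms, and isomorphic biquaternion algebras need not have isometric forms in your sense --- e.g.\ $[c,1)\otimes[0,1)\cong[0,1)\otimes[0,1)$, while $[c,1]\perp[0,1]$ and $[0,1]\perp[0,1]$ have Arf invariants $c$ and $0$. Since for $n=2$ there is no $I_q^3F=0$ hypothesis to fall back on, your plan for that case collapses back onto the unproven Arf-matching step.
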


\begin{proof}
Let $A$ be the tensor product under discussion.
Let $\mathcal{Y}$ be the set of all tensor products of quaternion algebras isomorphic to $A$, and $\mathcal{X}$ be the set of all standard quadratic forms $q \in I_q^2 F$ with nonzero second coefficients in all the summands such that $C(q) \cong M_2(A)$.
We define a map $\mathcal{A} : \mathcal{X} \rightarrow \mathcal{Y}$ by
$$\mathcal{A}([a_1,b_1] \perp \dots \perp [a_{n+1},b_{n+1}])=[a_1 b_1,b_{n+1} b_1) \otimes \dots \otimes [a_n b_n,b_{n+1} b_n).$$

The map $\mathcal{A}$ is surjective because for each $[c_1,d_1) \otimes \dots \otimes [c_n,d_n)$, $$\mathcal{A}([c_1 d_1^{-1},d_1] \perp \dots \perp [c_n d_n^{-1},d_n] \perp [c_1+\dots+c_n,1])=[c_1,d_1) \otimes \dots \otimes [c_n,d_n).$$

Let $[c_1,d_1) \otimes \dots \otimes [c_n,d_n),[c_1',d_1') \otimes \dots \otimes [c_n',d_n') \in \mathcal{Y}$ and let \\$[a_1,b_1] \perp \dots \perp [a_{n+1},b_{n+1}],[a_1',b_1'] \perp \dots \perp [a_{n+1}',b_{n+1}'] \in \mathcal{X}$ such that $$\mathcal{A}([a_1,b_1] \perp \dots \perp [a_{n+1},b_{n+1}])=[c_1,d_1) \otimes \dots [c_n,d_n)$$ $$\mathcal{A}([a_1',b_1'] \perp \dots \perp [a_{n+1}',b_{n+1}'])=[c_1',d_1') \otimes \dots [c_n',d_n').$$

If $I_q^3 F=0$ then $[a_1,b_1] \perp \dots \perp [a_{n+1},b_{n+1}]$ and $[a_1',b_1'] \perp \dots \perp [a_{n+1}',b_{n+1}']$ are isometric.
If $n=2$ then they are similar (see \cite{Jacobson1983} and \cite{MammoneShapiro}). In this case $[a_1',b_1'] \perp [a_2',b_2'] \perp [a_3',b_3'] \simeq \beta ([a_1,b_1] \perp [a_2,b_2] \perp [a_3,b_3]) \simeq [\beta a_1,\beta^{-1} b_1] \perp [\beta a_2,\beta^{-1} b_2] \perp [\beta a_3,\beta^{-1} b_3]$ for some $\beta \in F^\times$. Since  $\mathcal{A}([\beta a_1,\beta^{-1} b_1] \perp [\beta a_2,\beta^{-1} b_2] \perp [\beta a_3,\beta^{-1} b_3])$ is connected to $[c_1,d_1) \otimes [c_2,d_2) \otimes [c_3,d_3)$ by several steps of Type 4, we can assume without loss of generality that $[a_1,b_1] \perp \dots \perp [a_{n+1},b_{n+1}]$ and $[a_1',b_1'] \perp \dots \perp [a_{n+1}',b_{n+1}']$ are isometric. Therefore they are connected by the chain described in Theorem \ref{qfchain} and Remark \ref{nonzero}. It is enough to consider the case where they are connected by exactly one step, and the general case is obtained by induction.

Assume $[a_1,b_1] \perp \dots \perp [a_{n+1},b_{n+1}]$ and $[a_1',b_1'] \perp \dots \perp [a_{n+1}',b_{n+1}']$ are connected by a step of Type \textbf{A}. If $k \leq n-1$ then $c_i'=c_i$ and $d_i'=d_i$ for all $i \neq k,k+1$, and $c_k'=(a_k+b_{k+1}) b_k=c_k+(\frac{1}{b_{n+1}})^2 d_k d_{k+1}$, $d_k'=d_k$, $c_{k+1}'=(a_{k+1}+b_k) b_{k+1}=c_{k+1}+(\frac{1}{b_{n+1}})^2 d_k d_{k+1}$ and $d_{k+1}'=d_{k+1}$. In this case, $[c_1,d_1) \otimes \dots \otimes [c_n,d_n)$ and $[c_1',d_1') \otimes \dots \otimes [c_n',d_n')$ are connected by a step of Type 1.
If $k=n$ then they are connected by a step of Type 3.

Assume $[a_1,b_1] \perp \dots \perp [a_{n+1},b_{n+1}]$ and $[a_1',b_1'] \perp \dots \perp [a_{n+1}',b_{n+1}']$ are connected by a step of Type \textbf{B}. If $k \leq n$ then $[c_1,d_1) \otimes \dots \otimes [c_n,d_n)$ and $[c_1',d_1') \otimes \dots \otimes [c_n',d_n')$ can disagree only in the $k$th factor. In this case it reduces to the known chain equivalence for quaternion algebras, and so they are connected by steps of Types 3 and 4. If $k=n+1$ then they are connected by several steps of Type 4.

Assume $[a_1,b_1] \perp \dots \perp [a_{n+1},b_{n+1}]$ and $[a_1',b_1'] \perp \dots \perp [a_{n+1}',b_{n+1}']$ are connected by a step of Type \textbf{C}.
If $k \leq n$ then $[c_1,d_1) \otimes \dots \otimes [c_n,d_n)$ and $[c_1',d_1') \otimes \dots \otimes [c_n',d_n')$ can disagree only in the $k$th factor.
If $k=n+1$ then $[c_1,d_1) \otimes \dots \otimes [c_n,d_n)$ and $[c_1',d_1') \otimes \dots \otimes [c_n',d_n')$ agree coefficient-wise.

Assume $[a_1,b_1] \perp \dots \perp [a_{n+1},b_{n+1}]$ and $[a_1',b_1'] \perp \dots \perp [a_{n+1}',b_{n+1}']$ are connected by a step of Type \textbf{D}.
If $k \leq n$ then $[c_1,d_1) \otimes \dots \otimes [c_n,d_n)$ and $[c_1',d_1') \otimes \dots \otimes [c_n',d_n')$ can disagree only in the $k$th factor.
If $k=n+1$ then $c_i'=c_i$ and $d_i'=(b_{n+1}+\beta^2 a_{n+1}+\beta) b_i=d_i (1+(\frac{\beta}{b_{n+1}})^2 a_{n+1} b_{n+1}+\frac{\beta}{b_{n+1}})=d_i (1+(\frac{\beta}{b_{n+1}})^2 \sum_{r=1}^n c_r+\frac{\beta}{b_{n+1}})$ for all $i$.
Write $\gamma=\frac{\beta}{a_{n+1}}$ and we get $d_i'=d_i (1+\gamma^2 \sum_{r=1}^n c_r+\gamma)$.
In this case $[c_1,d_1) \otimes \dots \otimes [c_n,d_n)$ and $[c_1',d_1') \otimes \dots \otimes [c_n',d_n')$ are connected by a step of Type 2.
\end{proof}

\section{Tensor Products of Quaternion Algebras in Characteristic not $2$}\label{charnot2}

The following theorem recalls Witt's well-known chain equivalence for quadratic forms (see \cite{EKM} for reference):
\begin{thm}\label{qfcn2cl}
Over a field of characteristic not $2$, every two isomorphic diagonal nondegenerate quadratic forms are connected by a finite number of steps, where each step is either
\begin{itemize}
\item[\textbf{A}] $\langle a_k,a_{k+1}\rangle \rightarrow \langle(1+a_{k+1} a_k) a_k,(1+a_{k+1} a_k) a_{k+1}\rangle$
\end{itemize}
for some $k \in \set{1,\dots,n-1}$ and $\beta \in F^\times$, or
\begin{itemize}
\item[\textbf{B}] $\langle a_k\rangle \rightarrow \langle\beta^2 a_k\rangle$
\end{itemize}
for some $k \in \set{1,\dots,n}$ and $\beta \in F^\times$.
\end{thm}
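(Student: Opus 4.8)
The plan is to deduce the statement from the classical fact that two isometric diagonal forms over a field of characteristic not $2$ are joined by a chain of \emph{simple} steps, each replacing a binary subform on two of the coordinates by an isometric binary form (this is the content of Witt's chain equivalence as presented in \cite{EKM}); the only thing left to check is that every such binary isometry is itself a composite of the explicit steps \textbf{A} and \textbf{B}. Since \textbf{B} lets us rescale any single entry by an arbitrary square, and since two nondegenerate binary forms over a field of characteristic not $2$ are isometric precisely when they have equal determinant modulo squares and represent a common nonzero value, the whole problem reduces to the following binary statement: if $\langle a,b \rangle \cong \langle c,d \rangle$ with $a,b,c,d \in F^\times$, then $\langle a,b \rangle$ and $\langle c,d \rangle$ are connected by steps \textbf{A} and \textbf{B}.

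To prove this binary reduction I would first record the effect of a single step \textbf{A} dressed by \textbf{B}. Applying \textbf{B} to the second entry and then \textbf{A} sends $\langle a,b \rangle$ to $\langle a+b\mu^2,\ * \rangle$, where $\mu$ ranges over $F^\times$ and the scalar $1+ab\lambda^2$ produced by \textbf{A} equals $(a+b\mu^2)/a$, hence is automatically nonzero whenever the target value $a+b\mu^2$ is nonzero; so no degenerate instance of \textbf{A} is ever needed. Following this by \textbf{B} on the first entry, the reachable first entries are exactly the nonzero values $aX^2+bY^2$ with $X \neq 0$. Thus every nonzero value represented by $\langle a,b \rangle$ with nonzero first coordinate can be installed as the leading entry, after which \textbf{B} on the second entry corrects it to the prescribed value, the determinant condition guaranteeing agreement modulo squares.

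The one case this does not immediately cover is $c \equiv b$ modulo squares, i.e. the swap $\langle a,b \rangle \cong \langle b,a \rangle$. Here I would use the fact that in characteristic not $2$ the map $(\mu,\nu) \mapsto \nu^2-\mu^2$ is surjective onto $F$: solving $\nu^2-\mu^2 = a/b$ (with $\nu \neq 0$, which excludes only finitely many choices) gives $a+b\mu^2 = b\nu^2 \equiv b$, so that $b$ too is reachable as a leading entry through a single dressed \textbf{A}. This completes the binary reduction.

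Finally I would assemble the global statement by induction on $n$. Given isometric $\langle a_1,\dots,a_n \rangle \cong \langle b_1,\dots,b_n \rangle$, the value $b_1$ is represented by the first form; using the binary reduction (which in particular realizes adjacent transpositions, and hence arbitrary permutations of the entries) I would gather this representation into the first coordinate, bringing the first form to one whose leading entry is exactly $b_1$, then cancel and apply the inductive hypothesis to the remaining $(n-1)$-dimensional forms. The main obstacle I anticipate is precisely this gathering step: combining the coordinates two at a time can run into a vanishing partial sum, that is, an isotropic binary subform, and one must reorder the entries so as to keep every intermediate partial sum nonzero. This is the familiar technical point in Witt's argument, and checking that steps \textbf{A} and \textbf{B} suffice even in the presence of such isotropic pairs is where the care is needed.
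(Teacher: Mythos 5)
Your proposal is correct in substance, and it supplies an argument where the paper gives none: the paper states this theorem purely as a recollection of Witt's chain equivalence, with a pointer to \cite{EKM}. That citation does not literally cover the statement, because the classical chain equivalence takes as its elementary moves arbitrary binary isometries $\langle a_i,a_j\rangle \rightarrow \langle b_i,b_j\rangle$ on arbitrary (not necessarily adjacent) pairs of indices, whereas Types \textbf{A} and \textbf{B} here are very special moves on adjacent entries. Your ``binary reduction'' is exactly the missing bridge, and your execution of it is right: dressing \textbf{A} with \textbf{B} on either side realizes, as leading entry, precisely the nonzero values $aX^2+bY^2$ with $X\neq 0$ (and the scalar $1+ab\mu^2$ produced by \textbf{A} is the target value divided by $a$, so no degenerate instance of \textbf{A} is needed); the discriminant then pins the second entry down modulo squares, and the swap $\langle a,b\rangle\rightarrow\langle b,a\rangle$ is handled by solving $\nu^2-\mu^2=a/b$ with $\nu\neq 0$. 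Since swaps of adjacent entries generate all permutations, this also takes care of steps on non-adjacent pairs, so ``cited chain equivalence plus binary reduction'' is already a complete proof.

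Two caveats. First, your closing paragraph (re-running Witt's induction on $n$, with the worry about isotropic partial sums) is redundant under your own plan: that difficulty lives inside the proof of the cited theorem, which you may use as a black box; as written, your last paragraph leaves the difficulty unresolved, so either delete it or do not lean on the citation at all. Second, the parenthetical ``excludes only finitely many choices'' is not innocent over finite fields. Over $F=\mathbb{F}_3$ the equation $\nu^2-\mu^2=a/b$ has no solution with $\nu\neq 0$ when $a=-b$, and in fact the theorem as literally stated fails there: $\langle 1,-1\rangle$ and $\langle -1,1\rangle$ are isometric, but Type \textbf{B} is the identity (every $\beta^2$ equals $1$) and Type \textbf{A} produces the degenerate form $\langle 0,0\rangle$ because $1+(1)(-1)=0$, so the two forms are connected by no chain. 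Thus your argument, like the statement itself, needs $|F|>3$; to your credit, your proof correctly isolates the one spot where this hypothesis enters.
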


Let $F$ be a field of characteristic not 2.

\begin{thm}
If $I_q^3 F=0$ then every two isomorphic tensor products of $n$ quaternion algebras are connected by a finite number of steps, where each step is either
\begin{enumerate}
\item $(c_k,d_k) \otimes (c_{k+1},d_{k+1}) \rightarrow (c_k,(\beta^2-c_k c_{k+1}) d_k) \otimes (c_{k+1},(\beta^2-c_k c_{k+1}) d_{k+1})$
\end{enumerate}
for some $k \in \set{1,\dots,n-1}$ and $\beta \in F$, or one of the following types:
\begin{enumerate}\setcounter{enumi}{1}
\item $(c_k,d_k) \rightarrow ((\alpha^2-\beta^2 d_k) c_k,d_k)$,
\item $(c_k,d_k) \rightarrow (c_k,(\alpha^2-\beta^2 c_k) d_k)$,
\end{enumerate}
for some $k \in \set{1,\dots,n}$ and $\alpha,\beta \in F$.
\end{thm}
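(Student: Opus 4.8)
The plan is to reproduce the scheme of Section~\ref{tensorchar2}, with Witt's chain equivalence (Theorem~\ref{qfcn2cl}) in the role played there by Revoy's, and with the Clifford correspondence adapted to characteristic not $2$. Let $A=(c_1,d_1)\otimes\cdots\otimes(c_n,d_n)$ be the tensor product under discussion, let $\mathcal{Y}$ be the set of all tensor products of $n$ quaternion algebras isomorphic to $A$, and let $\mathcal{X}$ be the set of diagonal forms
$$q=\langle a_1,b_1\rangle\perp\cdots\perp\langle a_{n+1},b_{n+1}\rangle\in I_q^2 F,\qquad a_i,b_i\in F^\times,$$
of dimension $2(n+1)$ (equivalently, of trivial discriminant) with $C(q)\cong M_2(A)$. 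I would define $\mathcal{A}\colon\mathcal{X}\to\mathcal{Y}$ by
$$\mathcal{A}\!\left(\langle a_1,b_1\rangle\perp\cdots\perp\langle a_{n+1},b_{n+1}\rangle\right)=(-a_1 b_1,-b_{n+1}b_1)\otimes\cdots\otimes(-a_n b_n,-b_{n+1}b_n).$$
The first point to settle is that $\mathcal{A}$ is well defined, i.e.\ that $C(q)\cong M_2(\mathcal{A}(q))$. As both sides are central simple of degree $2^{n+1}$, it is enough to compare Brauer classes: one computes the Clifford invariant $c(q)=\sum_{i<j}(x_i,x_j)$ over the $2(n+1)$ entries $x_i$ and checks, using the triviality of the discriminant to cancel the cross terms, that it equals $\sum_{i=1}^{n}(-a_i b_i,-b_{n+1}b_i)$. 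Surjectivity of $\mathcal{A}$ is then witnessed by the identity
$$\mathcal{A}\!\left(\langle -c_1 d_1^{-1},d_1\rangle\perp\cdots\perp\langle -c_n d_n^{-1},d_n\rangle\perp\langle c_1\cdots c_n,-1\rangle\right)=(c_1,d_1)\otimes\cdots\otimes(c_n,d_n),$$
whose last block is chosen precisely to make the discriminant trivial.

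Now I would exploit the hypothesis $I_q^3 F=0$. Given two members of $\mathcal{Y}$, choose preimages $q,q'\in\mathcal{X}$. Isomorphic tensor products share the same Brauer class, so $c(q)=c(q')$; since the kernel of the Clifford homomorphism $c\colon I_q^2 F\to\Br_2(F)$ is $I_q^3 F=0$, the forms $q$ and $q'$ are Witt equivalent, and having the same dimension they are isometric. By Theorem~\ref{qfcn2cl} they are therefore joined by a finite chain of steps of Types~\textbf{A} and~\textbf{B}, and it suffices to treat a single such step and to conclude by induction, exactly as in Section~\ref{tensorchar2}.

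It remains to convert each Witt step into the steps $1$--$3$ of the statement. A step of Type~\textbf{B} scales a single entry by a square $\beta^2$: scaling $a_k$ or $b_k$ with $k\le n$ multiplies the $k$-th quaternion in one or both slots by $\beta^2$ and is the special case of steps~$2$ and~$3$ in which the relevant norm is a square, scaling the reference entry $b_{n+1}$ multiplies every $d_i$ by $\beta^2$ and so decomposes into $n$ parallel applications of step~$3$, and scaling $a_{n+1}$ changes no quaternion. A step of Type~\textbf{A} rescales a pair of entries by $1+a_k a_{k+1}$; arranged to act on the second slots of two blocks it rescales those two blocks by a common factor $\lambda$, and after the first slots are renormalised by steps~$2$ the net effect on the pair of quaternions is a step of Type~$1$, whose parameter $\mu=\beta^2-c_k c_{k+1}$ is forced to be the norm from $F(\sqrt{c_k c_{k+1}})$ that fixes the product of the two factors (the similarity $\mu\varphi\cong\varphi$ of the quaternary subform $\varphi$ spanned by the two blocks is available precisely because $I_q^3 F=0$ makes rank, discriminant and Clifford invariant a complete set of invariants), degenerating to a step of Type~$3$ when the reference block is one of the two. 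The main obstacle is exactly this last conversion: Witt's equivalence operates on adjacent \emph{entries} while steps~$1$--$3$ are organised by \emph{blocks}, so I must first push the chain into block-respecting form --- reordering by the transpositions available in Witt's theorem so that no step straddles two blocks --- and then carry out the case analysis matching each admissible block move to the prescribed norm parameters $\beta^2-c_k c_{k+1}$, $\alpha^2-\beta^2 d_k$ and $\alpha^2-\beta^2 c_k$. This bookkeeping, together with the cross-term cancellation in the well-definedness of $\mathcal{A}$, is where all the care is required.
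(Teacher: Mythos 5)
Your outline reproduces the paper's own strategy (the same $\mathcal{X}$ and $\mathcal{Y}$, a Clifford-type map $\mathcal{A}$, isometry of preimages from $I_q^3 F=0$, then a step-by-step conversion of the Witt chain of Theorem~\ref{qfcn2cl}), but there is a genuine gap at the foundation: your map $\mathcal{A}$ is not well defined, because the cross-term cancellation you invoke does not happen. Triviality of the discriminant controls only the \emph{total} product $a_1b_1\cdots a_{n+1}b_{n+1}$ modulo squares; it does not kill the pairwise products $(a_ib_i,a_jb_j)$, and these genuinely occur in the Brauer class of $C(q)$. (Separately, $\sum_{i<j}(x_i,x_j)$ is the Hasse invariant, not the Clifford invariant; in these dimensions the two differ by terms such as $(-1,-1)$.) Writing $\Br_2(F)$ additively, a Witt-ring computation for $n=2$ gives
$$\left[C\left(\langle a_1,b_1,a_2,b_2,a_3,b_3\rangle\right)\right]=(a_1,b_1)+(a_2,b_2)+(a_3,b_3)+(a_1b_1,a_2b_2)+(-1,-1),$$
and the summand $(a_1b_1,a_2b_2)$ is exactly what your blockwise formula drops (for $n=1$ it is absent, which is why your formula does agree with the paper's there). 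Concretely, take $F=\Q_2$, which does satisfy $I_q^3F=0$, and $q=\langle 1,1,1,1,1,-1\rangle$. Then $q\simeq\langle 1,1,1,1\rangle\perp\varmathbb{H}$, and the Clifford invariant of the $2$-fold Pfister form $\langle 1,1,1,1\rangle=\langle 1,1\rangle\otimes\langle 1,1\rangle$ is $(-1,-1)$, the quaternion division algebra over $\Q_2$; hence $C(q)\cong M_4((-1,-1))$, so $q\in\mathcal{X}$ for $A=(-1,-1)\otimes(1,1)$. Your formula, however, gives $\mathcal{A}(q)=(-1,1)\otimes(-1,1)$, which is split, hence $\mathcal{A}(q)\not\cong A$, i.e.\ $\mathcal{A}(q)\notin\mathcal{Y}$. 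The same defect breaks surjectivity: for $n=2$ the Clifford class of your proposed preimage of $(c_1,d_1)\otimes(c_2,d_2)$ works out to $(c_1,d_1)+(c_2,d_2)+(c_1,c_2)$, so it lies in $\mathcal{X}$ only when $(c_1,c_2)$ happens to split.

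This is precisely why the paper does not transcribe the characteristic-$2$ map literally: its map twists the $k$-th factor by the cascading coefficient $\gamma_k=(-1)^k\bigl(\prod_{i=1}^{k-1}a_ib_i\bigr)b_{n+1}$, whose cumulative products absorb the cross terms; with that map, a Type \textbf{A} step on $\langle b_k,a_{k+1}\rangle$ multiplies all factors beyond the $(k+1)$-st by squares only and changes the $k$-th and $(k+1)$-st by one step of Type 1 plus steps of Type 3. Two further points in your conversion would also need repair, though they are moot until the map is fixed: Theorem~\ref{qfcn2cl} as stated has no transposition steps, so ``reordering by the transpositions available in Witt's theorem'' is not available (the paper instead treats the straddling Type \textbf{A} steps directly), and the normalization that makes the bookkeeping work is keeping the last entry equal to $-1$ along the whole chain, which the paper obtains from Witt cancellation. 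The remainder of your argument can be salvaged once $\mathcal{A}$ is replaced by the paper's $\gamma_k$-twisted map, or by some other formula whose well-definedness is actually proved.
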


\begin{proof}
Let $A$ be the tensor product of quaternion algebras under discussion.
Let $\mathcal{Y}$ be the set of all tensor products of quaternion algebras isomorphic to $A$.
Let $\mathcal{X}$ be the set of all diagonal nondegenerate quadratic forms $q \in I_q^2 F$ such that $C(q) \cong M_2(A)$.

We define a map $\mathcal{A} : \mathcal{X} \rightarrow \mathcal{Y}$ by
\begin{eqnarray*}
\mathcal{A}(\langle a_1,b_1,\dots,a_{n+1},b_{n+1})\rangle&=&(\gamma_1 a_1,\gamma_1 b_1) \otimes (\gamma_2 a_2,\gamma_2 b_2) \otimes \dots \otimes (\gamma_n a_n,\gamma_n b_n).\\
\gamma_k&=&(-1)^k (\prod_{i=1}^{k-1} a_i b_i) b_{n+1}.
\end{eqnarray*}

The map $\mathcal{A}$ is surjective because for every presentation $(c_1,d_1) \otimes \dots \otimes (c_n,d_n)$, $\mathcal{A}(f)=(c_1,d_1) \otimes \dots \otimes (c_n,d_n)$ where
\begin{eqnarray*}
f&=&\langle \delta_1 c_1,\delta_1 d_1,\delta_2 c_2,\delta_2 d_2\dots,\delta_n c_n,\delta_n d_n,\delta_{n+1},-1\rangle\\
\delta_k&=&(-1)^{k-1} \prod_{i=1}^{k-1} (c_i d_i)^{(-1)^{i+k}}.
\end{eqnarray*}

Let $(c_1,d_1) \otimes \dots \otimes (c_n,d_n),(c_1',d_1') \otimes \dots \otimes (c_n',d_n') \in \mathcal{Y}$ and let \\$\langle a_1,b_1,\dots,a_{n+1},-1\rangle,\langle a_1',b_1',\dots,a_{n+1}',-1\rangle \in \mathcal{X}$ such that $$\mathcal{A}(\langle a_1,b_1,\dots,a_{n+1},b_{n+1}\rangle)=(c_1,d_1) \otimes \dots \otimes (c_n,d_n)$$  $$\mathcal{A}(\langle a_1',b_1',\dots,a_{n+1}',b_{n+1}'\rangle)=(c_1',d_1') \otimes \dots \otimes (c_n',d_n').$$

Since $I_q^3 F=0$, $\langle a_1,b_1,\dots,a_{n+1},-1\rangle$ and $\langle a_1',b_1',\dots,a_{n+1}',-1\rangle $ are isometric. Therefore, because of the Witt cancellation theorem, they are connected by the chain described in Theorem \ref{qfcn2cl} such that the last entry remains $-1$ throughout.
It is enough to consider the case where they are connected by exactly one step, and the general case is obtained by induction.

If $\langle a_1,b_1,\dots,a_{n+1},-1\rangle$ and $\langle a_1',b_1',\dots,a_{n+1}',-1\rangle $ are connected by a step of Type \textbf{B} then $(c_1,d_1) \otimes \dots \otimes (c_n,d_n)$ and $(c_1',d_1') \otimes \dots \otimes (c_n',d_n')$ are connected by several steps of Types 2 and 3.

Assume $\langle a_1,b_1,\dots,a_{n+1},-1\rangle$ and $\langle a_1',b_1',\dots,a_{n+1}',-1\rangle $ are connected by a step of Type \textbf{A}.
This step acts on either $\langle a_k,b_k\rangle$ or $\langle b_k,a_{k+1} \rangle$ for some $1 \leq k \leq n$.
If it acts on $\langle a_k,b_k\rangle$ then $(c_1,d_1) \otimes \dots \otimes (c_n,d_n)$ and $(c_1',d_1') \otimes \dots \otimes (c_n',d_n')$ can disagree in only one factor, and therefore they are connected by several steps of Types 2 and 3.

If it acts on $\langle b_k,a_{k+1} \rangle$ and $k \leq n-1$ then $c_i'=c_i$ and $d_i'=d_i$ for $i \leq k$, $c_k'=c_k$, $d_k'=(1+b_k a_{k+1}) d_k$, $c_{k+1}'=(1+b_k a_{k+1})^2 c_{k+1}$, $d_{k+1}'=(1+b_k a_{k+1}) d_{k+1}$ and $c_i'=(1+b_k a_{k+1})^2 c_i$ and $d_i'=(1+b_k a_{k+1})^2 d_i$ for $i \geq k+2$. $b_k a_{k+1} \equiv -c_k c_{k+1} \pmod{(F^\times)^2}$, and therefore $1+b_k a_{k+1} \equiv \beta^2-c_k c_{k+1} \pmod{(F^\times)^2}$ for some $\beta \in F^\times$. Consequently $(c_1,d_1) \otimes \dots \otimes (c_n,d_n)$ and $(c_1',d_1') \otimes \dots \otimes (c_n',d_n')$ are connected by one step of Type 1 and several steps of Type 3.

If it acts on $\langle b_n,a_{n+1}\rangle$ then $(c_1,d_1) \otimes \dots \otimes (c_n,d_n)$ and $(c_1',d_1') \otimes \dots \otimes (c_n',d_n')$ are connected by one step of Type 3.
\end{proof}

\section*{Acknowledgements}
I would like to thank Jean-Pierre Tignol, Uzi Vishne and the anonymous referee, whose comments improved the quality of the paper considerably.

\section*{Bibliography}
\bibliographystyle{amsalpha}
\bibliography{bibfile}
\end{document}